\documentclass[reqno]{amsart}

\usepackage[utf8]{inputenc}
\usepackage{a4wide, hyperref, microtype, url, amssymb, amsmath, pifont, graphicx, mathrsfs}
\usepackage{tikz, tikz-cd}
\input xy
\xyoption{all}
\usepackage[all,knot]{xy}
\usepackage{comment}

\usepackage{quiver}

\usepackage{amsthm}
\theoremstyle{plain}
\newtheorem{theorem}{Theorem}[section]
\newtheorem{corollary}[theorem]{Corollary}
\newtheorem{proposition}[theorem]{Proposition}

\theoremstyle{definition}
\newtheorem{definition}[theorem]{Definition}
\newtheorem{example}[theorem]{Example}
\newtheorem{remark}[theorem]{Remark}

\newtheorem{definition-lemma}[theorem]{Definition-Lemma}

\numberwithin{equation}{section}
\numberwithin{figure}{section}

\usepackage{todonotes}
\todostyle{mystyle}{color=green!30,size=\small}

\newcommand{\aff}{\operatorname{aff}}
\newcommand{\fin}{\operatorname{fin}}
\newcommand{\re}{\operatorname{re}}
\newcommand{\SL}{\operatorname{SL}}
\newcommand{\cf}{{\it cf }}

\newcommand{\ba}{\mathbf{a}}
\newcommand{\bd}{\mathbf{d}}
\newcommand{\bu}{\mathbf{u}}
\newcommand{\bZ}{\mathbb{Z}}

\newcommand{\cA}{\mathcal{A}}
\newcommand{\cT}{\mathcal{T}}

\title{On the growth of friezes via theta functions}

\author[Pierre-Guy Plamondon]{Pierre-Guy Plamondon}
\address[Pierre-Guy Plamondon]
{}
\email{pierre-guy.plamondon@uvsq.fr}
\urladdr{\url{https://plamondon.pages.math.cnrs.fr/plamondon/}}

\author[Salvatore Stella]{Salvatore Stella}
\address[Salvatore Stella]
{}
\email{salvatore.stella@univaq.it}
\urladdr{\url{http://people.disim.univaq.it/~salvatore.stella/}}

\begin{document}

\begin{abstract}
  We prove that the infinite friezes arising from the tubes of a given cluster algebra of acyclic affine type all have the same growth coefficients.
  Our proof uses identities satisfied by theta functions.
  This generalizes previous results in affine types~$ADE$ by several groups of authors.
\end{abstract}

\maketitle

\section{Infinite friezes}

In this short note, we prove (see Theorem~\ref{theo::main}) that the infinite friezes arising from the tubes of a given cluster algebra of affine type all have the same growth coefficient, thus generalizing results of \cite{BFPT} for affine type~$A$, \cite{BBGTY} for affine type~$D$, and \cite{BFPTY} for affine types~$ADE$.

An \emph{infinite frieze} is an array of positive integers of the form
\smallskip
\begin{center}
  \begin{tikzcd}[column sep=tiny, row sep=tiny, arrows=dash]
    \cdots \arrow[dr] & & a_{1,1} \arrow[dl] \arrow[dr] & & a_{1,2} \arrow[dl] \arrow[dr] & & a_{1,3} \arrow[dl] \arrow[dr] & & a_{1,4} \arrow[dl] \arrow[dr] & \\
    & a_{2,0} \arrow[dl] \arrow[dr] & & a_{2,1} \arrow[dl] \arrow[dr] & & a_{2,2} \arrow[dl] \arrow[dr] & & a_{2,3} \arrow[dl] \arrow[dr] & & \cdots \arrow[dl] \\
    \cdots \arrow[dr] & & a_{3,0} \arrow[dl] \arrow[dr] & & a_{3,1} \arrow[dl] \arrow[dr] & & a_{3,2} \arrow[dl] \arrow[dr] & & a_{3,3} \arrow[dl] \arrow[dr] & \\
    & \cdots & & \cdots & & \cdots & & \cdots & & \cdots
  \end{tikzcd}
\end{center}
satisfying the following unimodularity relation: $a_{i,j}a_{i,j+1} - a_{i-1, j+1}a_{i+1, j} = 1$, with the convention that~$a_{0,j} = 1$ for all~$j\in \bZ$.
In other words, all small diamonds form a~$2\times 2$ matrix with determinant~$1$.
Note that the array is infinite in the left, right and down directions, but not in the upward direction.

The upper row of an infinite frieze is called the~\emph{quiddity row}; it determines the whole frieze.
If it has period~$r$ then every row of the frieze does and we say that the infinite frieze is~$r$-periodic.

\begin{example}\label{exam::initial}
  Below are two infinite friezes.
  On the left is an infinite frieze that is~$2$-periodic (so we only show two entries per row); on the right is another that is~$3$-periodic.

  \begin{center}
    \begin{tikzcd}[column sep=tiny, row sep=0.8em, font=\tiny, arrows=dash]
      6 \arrow[dr] && 20 \arrow[dl] \arrow[dr] && && && && && 3 \arrow[dr] && 5 \arrow[dl] \arrow[dr] && 9 \arrow[dl] \arrow[dr]\\
      & 119 \arrow[dl] \arrow[dr] && 119 \arrow[dl]  && && && && && 14 \arrow[dl] \arrow[dr] && 44 \arrow[dl] \arrow[dr] && 26 \arrow[dl] \\
      2360 \arrow[dr] && 708 \arrow[dl] \arrow[dr] && && && && && 121 \arrow[dr] && 123 \arrow[dl] \arrow[dr] && 127 \arrow[dl] \arrow[dr]\\
      & 14041 \arrow[dl] \arrow[dr] && 14041 \arrow[dl] && && && && && 1063 \arrow[dl] \arrow[dr] && 355 \arrow[dl] \arrow[dr] && 591 \arrow[dl] \\
      83538 \arrow[dr] && 278460 \arrow[dl] \arrow[dr] && && && && && 5192 \arrow[dr] && 3068 \arrow[dl] \arrow[dr] && 1652 \arrow[dl] \arrow[dr]\\
      & 1656719 \arrow[dl] \arrow[dr] && 1656719 \arrow[dl]  && && && && && 14985 \arrow[dl] \arrow[dr] && 14277 \arrow[dl] \arrow[dr] && 14513 \arrow[dl] \\
      \cdots && \cdots && && && && && \cdots && \cdots && \cdots
    \end{tikzcd}
  \end{center}
\end{example}

For periodic infinite friezes, the following was observed in~\cite{BFPT}.

\begin{theorem}[Theorem 2.2 of~\cite{BFPT}]
  Let~$(a_{i,j})$ be an~$r$-periodic infinite frieze.
  Then the value of~$a_{r,j} - a_{r-2, j+1}$ is the same for all choices of~$j\in\bZ$.
\end{theorem}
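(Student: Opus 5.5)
The approach is to encode each row of the frieze as a product of $2\times 2$ matrices in the quiddity entries, and then read off $a_{r,j}-a_{r-2,j+1}$ as the trace of a monodromy matrix, which does not depend on $j$.

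Set $c_j = a_{1,j}$ and
\[
M(c) = \begin{pmatrix} c & -1 \\ 1 & 0 \end{pmatrix}, \qquad M_j = M(c_j)M(c_{j+1})\cdots M(c_{j+k-1}).
\]

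\textbf{Step 1.} Show by induction on $k$ that
\[
M_j = \begin{pmatrix} a_{k,j'} & -a_{k-1,j''} \\ a_{k-1,j'''} & -a_{k-2,j''''} \end{pmatrix},
\]
with appropriate indices $j',\dots,j''''$. These are the classical continuant formulas. In the diagonal convention of the paper, the entries in row $k$ starting at column $j$ should satisfy $a_{k,\cdot}=\det$ of the tridiagonal matrix with $c_j,\dots,c_{j+k-1}$ on the diagonal and $1$'s off the diagonal.

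The first task is therefore to fix the indexing: verify from the unimodular rule that row $k$ entries are these continuants $K(c_j,\dots,c_{j+k-1})$. Set $K$ of the empty sequence to be $1$, which matches $a_{0,j}=1$, and set $K$ of length $-1$ to be $0$. The continuant satisfies the Desnanot--Jacobi (Dodgson condensation) identity
\[
K(c_j..c_{j+k-1})\,K(c_{j+1}..c_{j+k}) - K(c_j..c_{j+k})\,K(c_{j+1}..c_{j+k-1}) = 1.
\]
This is exactly the diamond rule, so by induction on rows (the frieze being determined by its quiddity row, since all entries are positive) the identification holds. From the continuant recursion one gets
\[
M(c_j)\cdots M(c_{j+k-1}) = \begin{pmatrix} K(c_j..c_{j+k-1}) & -K(c_j..c_{j+k-2}) \\ K(c_{j+1}..c_{j+k-1}) & -K(c_{j+1}..c_{j+k-2}) \end{pmatrix}.
\]

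\textbf{Step 2.} Take $k=r$ and use $r$-periodicity. Then
\[
\operatorname{tr} M_j = K(c_j..c_{j+r-1}) - K(c_{j+1}..c_{j+r-2}).
\]
In frieze coordinates this is the row-$r$ entry minus the row-$(r-2)$ entry sitting directly below it in the diamond sense, i.e.\ $a_{r,j}-a_{r-2,j+1}$ after matching indices.

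\textbf{Step 3.} Periodicity gives $M_{j+1} = M(c_j)^{-1} M_j M(c_j)$, since $c_{j+r}=c_j$. Hence all the $M_j$ are conjugate, their traces agree, and the quantity $a_{r,j}-a_{r-2,j+1}$ is independent of $j$.

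The main obstacle is purely bookkeeping: reconciling the paper's staggered indexing $a_{i,j}$ with the continuant windows so that the row-$(r-2)$ entry appearing in the trace is exactly $a_{r-2,j+1}$. I would check this against the $2$-periodic example: with quiddity $6,20$ one gets $\operatorname{tr}=6\cdot 20-2=118$, and indeed $a_{2,\cdot}-a_{0,\cdot}=119-1=118$, which confirms the alignment.
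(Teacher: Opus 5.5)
Your proof is correct, and it takes a different route from the paper. The paper does not reprove this statement directly. It recovers it (Proposition~\ref{prop::unifrieze}(1), then the corollary for integral domains) from Muir's explicit expansion in Proposition~\ref{prop::explicit-formula}(2). There, $a_{i,j}-a_{i-2,j+1}$ is the sum over all ways of replacing disjoint pairs of \emph{cyclically} consecutive variables in $z_j\cdots z_{j+i-1}$ by $-1$, which is visibly invariant under $j\mapsto j+1$ when $r\mid i$.

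You instead identify $M(c_j)\cdots M(c_{j+r-1})$ with $\begin{pmatrix} a_{r,j} & -a_{r-1,j}\\ a_{r-1,j+1} & -a_{r-2,j+1}\end{pmatrix}$ and use that cyclic shifts of the product are conjugate. Your bookkeeping worry is already settled by your own displayed formula. With the paper's convention $a_{k,j}=K(c_j,\dots,c_{j+k-1})$, the trace is exactly $a_{r,j}-a_{r-2,j+1}$. The $3$-periodic example is a sharper test than the $2$-periodic one, since row $0$ is constant: $M(3)M(5)M(9)=\begin{pmatrix}123&-14\\44&-5\end{pmatrix}$ has trace $123-5=118$. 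Both arguments need non-zero entries to identify the frieze with continuants; you handle this via positivity, just as the paper's corollary does.

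The paper's route yields an explicit combinatorial formula for the growth coefficient. Yours gives more structure with less work. Since $M_j^k$ is a product of $rk$ consecutive matrices, $s_k=\operatorname{tr}(M_j^k)$, which is independent of $j$ for every $k$ at once. Cayley--Hamilton then gives $M_j^2=s_1M_j-I$, because $\det M_j=1$. Hence $s_{k+2}=s_1s_{k+1}-s_k$, with $s_0=\operatorname{tr} I=2$, in one line. This would replace the long computation the paper uses for Proposition~\ref{prop::unifrieze}(3).
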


\begin{definition}[\cite{BFPT}]
  Let~$(a_{i,j})$ be an infinite periodic frieze of minimal period~$r$.
  Its \emph{growth coefficients} are given by~$s_0 = 2$ and~$s_k = a_{rk, j} - a_{rk-2, j+1}$ for all~$k\geq 1$ (this is independent of the choice of~$j\in \bZ$).
  If we refer to a growth coefficient without specifying~$k$, then we always mean the one for~$k=1$.
\end{definition}

\begin{remark}
  It follows from~\cite[Proposition 2.10]{BFPT} that~$s_1$ determines all~$s_k$ for~$k\geq 2$ by the Chebyshev recurrence relation~$s_0=2$ and~$s_{k+2} = s_1s_{k+1} - s_k$ for~$k\geq 0$.
\end{remark}

\begin{example}
  The friezes in Example~\ref{exam::initial} both have growth coefficient~$s_1=118$.
\end{example}

In the proof of our main result, we will be concerned with friezes with values in rings other than~$\bZ$.
The above notions of~$r$-periodic infinite friezes and growth coefficients generalize readily; for the convenience of the reader, we nevertheless lay them out explicitly in the remainder of this section.

Let~$R$ be a commutative ring.
An \emph{infinite frieze with values in~$R$} is an array of elements~$a_{i,j}$ (with~$i\geq 0,\ j\in \bZ$) of~$R$ satisfying the unimodularity relation~$a_{i,j}a_{i,j+1} - a_{i-1, j+1}a_{i+1, j} = 1$, with~$a_{0,j} = 1$ for all~$j\in \bZ$.
An infinite frieze is~$r$-periodic if~$a_{i,j} = a_{i,j+r}$ for all~$i,j$.

The \emph{universal~(possibly $r$-periodic) infinite frieze} is the frieze~$(a_{i,j})$ with values in the polynomial ring~$\bZ[z_j \ | \ j\in \bZ]$ defined by
\[
  a_{i,j} = \det \begin{bmatrix} z_{j} & 1 & & & 0 \\
    1 & z_{j+1} & 1 && \\
    & \ddots & \ddots & \ddots & \\
    && 1 & z_{j+i-2} & 1 \\
    &&& 1 & z_{j+i-1}
  \end{bmatrix}_{i\times i}
\]
for~$i\geq 1,\ j\in \bZ$, (with~$j$ taken modulo~$r$ for the~$r$-periodic version).
The fact that this defines a frieze is a consequence of the Desnanot--Jacobi identity applied to the above matrix.
These determinants appeared for infinite friezes of positive integers in~\cite[Sect. 2]{BFPT}, in~\cite{Coxeter} for finite friezes, and are related to relations obtained in~\cite{BR} for~$\SL_k$-tilings of the plane.

The above matrix is tridiagonal, and as such its determinant is a \emph{continuant} (applied to the case where the upper and lower diagonal only contain~$1$'s).
We can use classical identities on continuants to prove what we need; we will refer to \cite[Ch. XIII]{Muir}.:

First, we exhibit the~$a_{i,j}$ as explicit polynomials in the~$z_i$.

\begin{proposition}
  \label{prop::explicit-formula}
  Consider the (non-periodic) universal infinite frieze defined above.
  Then
  \begin{enumerate}
    \item each~$a_{i,j}$ is the sum of the monomial~$z_jz_{j+1}\cdots z_{j+i-1}$ and of all monomials obtained from it by replacing disjoint pairs of consecutive~$z_k$'s by~$-1$;

    \item each~$a_{i,j}-a_{i-2,j+1}$ is the sum of the monomial~$z_jz_{j+1}\cdots z_{j+i-1}$ and of all monomials obtained from it by replacing disjoint pairs of consecutive~$z_k$'s in the product (allowing also the pair formed by~$z_{j+i-1}$ and~$z_j$) by~$-1$.
  \end{enumerate}
\end{proposition}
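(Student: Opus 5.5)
The plan is to prove both parts from the three-term recurrence for continuants, obtained by expanding the tridiagonal determinant along its last row. Write $K(z_j,\dots,z_{j+i-1})$ for the determinant defining $a_{i,j}$, with the conventions $a_{0,j}=1$ and $a_{-1,j}=0$. Laplace expansion along the last row gives
\[
  a_{i,j} = z_{j+i-1}\,a_{i-1,j} - a_{i-2,j},
\]
which is the standard continuant recurrence of \cite[Ch.~XIII]{Muir}. The minus sign appears because the off-diagonal entries are $1$, so the cofactor of the $(i,i-1)$ entry contributes $-1\cdot 1\cdot a_{i-2,j}$.

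For part~(1), I argue by induction on $i$. The cases $i=0,1$ are immediate: the empty product gives $1$, and $a_{1,j}=z_j$. For the inductive step, call a \emph{matching} any set of disjoint pairs of consecutive indices in the path $j,j+1,\dots,j+i-1$. The monomial attached to a matching replaces each matched pair by $-1$ and keeps the unmatched variables. The matchings split according to whether $j+i-1$ is unmatched or is paired with $j+i-2$.
\begin{itemize}
  \item If $j+i-1$ is unmatched, these matchings are exactly the matchings of the first $i-1$ indices, each multiplied by $z_{j+i-1}$. This gives the term $z_{j+i-1}a_{i-1,j}$.
  \item If $j+i-1$ is paired with $j+i-2$, these matchings are exactly the matchings of the first $i-2$ indices, multiplied by $-1$. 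This gives the term $-a_{i-2,j}$.
\end{itemize}
So the matching sum satisfies the same recurrence and the same initial values as $a_{i,j}$, and the two agree.

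For part~(2), I compare matchings of the cycle on the indices $j,\dots,j+i-1$ (where the extra pair $\{j+i-1,j\}$ is allowed) with matchings of the path. The cycle matchings that do not use the wrap-around pair are precisely the path matchings, and by part~(1) they sum to $a_{i,j}$. The cycle matchings that do use the wrap-around pair $\{j+i-1,j\}$ correspond bijectively to path matchings of the remaining indices $j+1,\dots,j+i-2$, with an extra factor $-1$. By part~(1) applied to the string starting at $j+1$ of length $i-2$, these sum to $-a_{i-2,j+1}$. Adding the two contributions gives $a_{i,j}-a_{i-2,j+1}$, as claimed.

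The only real obstacle is bookkeeping in small cases. For $i=2$, the wrap-around pair $\{j+1,j\}$ coincides with the ordinary pair. The statement should then be read as counting the pair once for each edge of the $2$-cycle, which gives $z_jz_{j+1}-2=a_{2,j}-a_{0,j+1}$, consistent with $s_0=2$-type conventions. I would state this convention explicitly. I would also note that for $i=1$ the formula reads $a_{1,j}-a_{-1,j+1}=z_j$, which holds by the convention $a_{-1,\cdot}=0$.
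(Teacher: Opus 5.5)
Your proof is correct and follows the same route as the paper. The paper cites Muir (\S\S 544--545) for the matching expansion in (1), which you derive directly from the continuant recurrence, and it obtains (2) from (1) by exactly your split according to whether the wrap-around pair is used. Your remark on $i=2$ is a worthwhile addition: the wrap-around pair must be counted as a second edge so that $a_{2,j}-a_{0,j+1}=z_jz_{j+1}-2$, which is what the paper actually needs for rank-$2$ tubes (e.g.\ $6\cdot 20-2=118$ in the first example).
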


\begin{proof}
  The first point is proved in paragraphs 544 and 545 in \cite{Muir}.
  The second point follows directly from the first.
\end{proof}

\begin{example}
  We have that~$a_{5,1} = z_1z_2z_3z_4z_5 - z_1z_2z_3 - z_1z_2z_5 - z_1z_4z_5 - z_3z_4z_5 + z_1+z_3+z_5$ and $a_{5,1}-a_{3,2} = z_1z_2z_3z_4z_5 - z_1z_2z_3 - z_1z_2z_5 - z_1z_4z_5 - z_2z_3z_4 - z_3z_4z_5  + z_1+z_2+z_3+z_4+z_5$.
\end{example}

We can use these expressions to prove that growth coefficients are well defined for the universal periodic friezes.
The results of the next proposition have already appeared for friezes of integers in \cite{BFPT}, but we show that they hold for universal friezes and thus for friezes with non-zero entries in any integral domain.

\begin{proposition}
  \label{prop::unifrieze}
  Consider the universal~$r$-periodic infinite frieze.
  The following properties hold.
  \begin{enumerate}
    \item The \emph{growth coefficients}~$s_k (k\geq 0)$ are well-defined, that is, the value of~$s_k = a_{rk, j} - a_{rk-2, j+1}$ does not depend on~$j$.

    \item For all~$j\in \bZ, i\in \bZ_{>0}$ and~$1\leq \ell \leq i$, we have that~$a_{i,j} = a_{\ell,j}a_{i-\ell, \ell+j} - a_{\ell - 1,j}a_{i-\ell-1,\ell+j+1}$.

    \item We have that~$s_{k+2} = s_1s_{k+1} - s_k$ for~$k\geq 0$ (with the convention that~$s_0=2$).
  \end{enumerate}
\end{proposition}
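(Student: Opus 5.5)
We plan to prove the three parts in the order (1), (2), (3), with (2) serving as the main tool for (3).

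For part (1) we use Proposition~\ref{prop::explicit-formula}(2). In the universal $r$-periodic frieze, indices are read modulo $r$. Then $a_{rk,j}-a_{rk-2,j+1}$ is the sum over all sets of pairwise disjoint pairs of cyclically consecutive positions in a cycle of length $rk$. Each term is the monomial $z_jz_{j+1}\cdots z_{j+rk-1}$ with the chosen pairs replaced by $-1$. Since $rk$ is a multiple of $r$, the product $z_jz_{j+1}\cdots z_{j+rk-1}$ runs through every residue class modulo $r$ exactly $k$ times and does not depend on $j$. Shifting $j$ to $j+1$ amounts to rotating the cycle of positions by one step. This rotation maps the set of cyclic matchings bijectively onto itself and preserves the labels $z_{m \bmod r}$, because $rk\equiv 0 \pmod r$. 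Hence the sum is independent of $j$. The only point to watch is the wrap-around pair $(z_{j+rk-1},z_j)$: it becomes an ordinary consecutive pair after the rotation, which is exactly why the cyclic version in Proposition~\ref{prop::explicit-formula}(2) is the right one to use.

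For part (2) we use the classical expansion of a tridiagonal determinant along a split of the diagonal (Muir, Ch.~XIII). Write $a_{i,j}$ as the continuant $K(z_j,\dots,z_{j+i-1})$. Laplace expansion, or equivalently Proposition~\ref{prop::explicit-formula}(1), gives
\[
K(z_j,\dots,z_{j+i-1}) = K(z_j,\dots,z_{j+\ell-1})\,K(z_{j+\ell},\dots,z_{j+i-1}) - K(z_j,\dots,z_{j+\ell-2})\,K(z_{j+\ell+1},\dots,z_{j+i-1}).
\]
Combinatorially, we split the matchings of the path according to whether the edge between positions $\ell-1$ and $\ell$ (relative to $j$) is used. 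If it is not used, we get the product of the two continuants. If it is used, the two endpoints are removed, contributing $-1$ times the shortened continuants. Translating back to frieze notation gives the stated formula, with the conventions $a_{0,\cdot}=1$ and $a_{-1,\cdot}=0$ for the edge case $\ell=i$.

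For part (3) we take the $2\times 2$ transfer matrix $M_j=\begin{bmatrix} z_j & -1\\ 1 & 0\end{bmatrix}$. Its products satisfy
\[
M_j M_{j+1}\cdots M_{j+i-1}=\begin{bmatrix} a_{i,j} & -a_{i-1,j}\\ a_{i-1,j+1} & -a_{i-2,j+1}\end{bmatrix}.
\]
This follows by induction from the recurrence $a_{i,j}=z_{j+i-1}a_{i-1,j}-a_{i-2,j}$, which is the $\ell=i-1$ case of part (2). The trace of this product is $a_{i,j}-a_{i-2,j+1}$. By periodicity, $P=M_j\cdots M_{j+r-1}$ satisfies $P^k=M_j\cdots M_{j+rk-1}$, so $s_k=\operatorname{tr}(P^k)$ for $k\ge 1$. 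Also $\det P=1$ and $\operatorname{tr}(P^0)=2=s_0$. Cayley--Hamilton gives $P^2=s_1P-I$, so $P^{k+2}=s_1P^{k+1}-P^k$, and taking traces yields $s_{k+2}=s_1s_{k+1}-s_k$. The same trace formula also reproves part (1), since traces are invariant under cyclic rotation of the factors. The main obstacle is the bookkeeping of sign and index conventions in the matrix identity; we will check it against the example of $a_{5,1}$ given above.
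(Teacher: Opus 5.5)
Your proof is correct. Parts (1) and (2) follow the paper. For (1) the paper uses exactly the rotation-invariance of the cyclic expansion in Proposition~\ref{prop::explicit-formula}(2). For (2) it simply cites the continuant splitting identity from Muir, and your matching argument is the standard proof of that identity: you split on whether the edge between positions $\ell-1$ and $\ell$ is used, with $a_{-1,\cdot}=0$ when $\ell=i$.

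Part (3) is where you genuinely diverge. The paper proves $s_{k+2}-s_1s_{k+1}=-s_k$ by a direct computation in the frieze entries. It expands $s_{k+2}$ using (2) with $\ell=r(k+1)$, expands again with $\ell=rk$, realigns indices by periodicity, cancels four terms, and concludes with the unimodularity rule.

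You instead encode the three-term recurrence (the case $\ell=i-1$ of (2)) in the transfer matrices $M_j$. This gives $s_k=\operatorname{tr}(P^k)$ with $\det P=1$. That determinant condition is precisely the unimodularity rule, since $\det(M_j\cdots M_{j+i-1})=a_{i-1,j}a_{i-1,j+1}-a_{i,j}a_{i-2,j+1}$. The Chebyshev recursion is then just Cayley--Hamilton for a $2\times 2$ matrix of determinant $1$.

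Your route has several advantages. It is shorter and less prone to index errors. It uses only the three-term recurrence rather than the general identity (2). It yields (1) for free from the cyclic invariance of the trace, which also sidesteps interpreting the wrap-around pair in the cyclic expansion when $rk\le 2$. It also explains conceptually why the $s_k$ behave like $\lambda^k+\lambda^{-k}$ for the eigenvalues $\lambda^{\pm1}$ of $P$. The paper's computation has only the modest advantage of staying entirely within the frieze entries and using the general identity (2), which it states anyway.
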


\begin{proof}
  We use the expression of~$a_{i,j}-a_{i-2,j+1}$ in Proposition~\ref{prop::explicit-formula}, and note that if~$i$ is a multiple of~$r$, then it is invariant under the change of indices~$j\leftarrow (j+1)$ (since~$j$ is taken modulo~$r$).
  This proves point~(1).
  Point (2) is a consequence of paragraph 547, formula (1) of~\cite{Muir}.
  To prove point (3), first use (2) to write
  \begin{eqnarray*}
    s_{k+1}s_1 &=& (a_{r(k+1),1} - a_{r(k+1)-2,2})(a_{r,1}-a_{r-2,2}) \\
    &=& a_{r(k+1),1}a_{r,1} - a_{r(k+1),1}a_{r-2,2} - a_{r(k+1)-2,2}a_{r,1} + a_{r(k+1)-2,2}a_{r-2,2}.
  \end{eqnarray*}
  Next, use (2) and~$r$-periodicity to write
  \begin{eqnarray*}
    s_{k+2} &=& a_{r(k+2),1} - a_{r(k+2)-2,2} \\
    &=& a_{r(k+1),1}a_{r,1+r(k+1)} - a_{r(k+1)-1,1}a_{r-1,r(k+1)+2} + \\
    && \ - a_{r(k+1)-1,2}a_{r-1,2+r(k+1)-1} + a_{r(k+1)-2,2}a_{r-2,2+r(k+1)} \\
    &=& a_{r(k+1),1}a_{r,1} - a_{r(k+1)-1,1}a_{r-1,2} - a_{r(k+1)-1,2}a_{r-1,1} + a_{r(k+1)-2,2}a_{r-2,2}.
  \end{eqnarray*}
  Taking the difference, we get
  \begin{eqnarray*}
    s_{k+2}-s_{k+1}s_1 &=& a_{r(k+1),1}a_{r-2,2} - a_{r(k+1)-1,1}a_{r-1,2} + a_{r(k+1)-2,2}a_{r,1} - a_{r(k+1)-1,2}a_{r-1,1} \\
    &=& (a_{rk,1}a_{r,rk+1} - a_{rk-1,1}a_{r-1,rk+2}) a_{r-2,2}  + \\
    && \ - (a_{rk,1}a_{r-1,rk+1} - a_{rk-1,1}a_{r-2,rk+2})a_{r-1,2} + \\
    && \ + (a_{rk-1,2}a_{r-1,rk+1} - a_{rk-2,2}a_{r-2,rk+2}) a_{r,1} + \\
    && \ - (a_{rk-1,2}a_{r,rk+1} - a_{rk-2,2}a_{r-1,rk+2}) a_{r-1,1} \\
    &=& a_{rk,1}a_{r,1}a_{r-2,2} - \underline{a_{rk-1,1}a_{r-1,2} a_{r-2,2} } + \\
    && \ - a_{rk,1}a_{r-1,1}a_{r-1,2} + \underline{a_{rk-1,1}a_{r-2,2}a_{r-1,2}} + \\
    && \ + \underline{\underline{a_{rk-1,2}a_{r-1,1} a_{r,1}}}- a_{rk-2,2}a_{r-2,2} a_{r,1} + \\
    && \ - \underline{\underline{a_{rk-1,2}a_{r,1}a_{r-1,1}}} + a_{rk-2,2}a_{r-1,2}a_{r-1,1} \\
    &=& a_{rk,1}(a_{r,1}a_{r-2,2} - a_{r-1,1}a_{r-1,2}) - a_{rk-2,2}(a_{r-2,2}a_{r,1} - a_{r-1,2}a_{r-1,1}) \\
    &=& -a_{rk,1} + a_{rk-2,2} \\
    &=& -s_{k},
  \end{eqnarray*}
  where the second equality is proved using (2), the third using~$r$-periodicity, and the fifth using the unimodularity rule.
\end{proof}

We deduce results on growth coefficient for friezes without zero entries in an integral domain.

\begin{corollary}
  Let~$R$ be an integral domain, and let~$(a_{i,j})$ be an~$r$-periodic infinite frieze whose values are all non-zero.
  Assume that~$r$ is the minimal period of the frieze.
  The following hold.
  \begin{itemize}
    \item The frieze is determined by its quiddity row and is obtained from the universal~$r$-periodic frieze by specializing each~$z_j$ to~$a_{1,j}$.

    \item The \emph{growth coefficients}~$s_k (k\geq 0)$ are well-defined, that is, the value of~$s_k = a_{rk, j} - a_{rk-2, j+1}$ does not depend on~$j$.

    \item We have that~$s_{k+2} = s_1s_{k+1} - s_k$ for~$k\geq 0$ (with the convention that~$s_0=2$).
  \end{itemize}
\end{corollary}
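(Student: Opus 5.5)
The plan is to reduce everything to the universal $r$-periodic frieze via a specialization homomorphism, and to prove the first bullet by induction on the row index using the unimodularity relation.

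\emph{The frieze is a specialization of the universal one.} Let $\varphi\colon \bZ[z_j \mid j\in\bZ/r] \to R$ be the ring homomorphism sending $z_j \mapsto a_{1,j}$. This is well defined because the quiddity row is $r$-periodic. Write $u_{i,j}$ for the entries of the universal $r$-periodic frieze. We prove $\varphi(u_{i,j}) = a_{i,j}$ by induction on $i$.
\begin{itemize}
\item For $i=0$, both sides equal $1$.
\item For $i=1$, this holds by the definition of $\varphi$.
\item For the inductive step, take $i\geq 1$ and assume the claim for rows $i-1$ and $i$. Unimodularity in $(a_{i,j})$ gives
\[
a_{i-1,j+1}\,a_{i+1,j} = a_{i,j}a_{i,j+1} - 1 .
\]
Applying $\varphi$ to the same relation for the universal frieze gives
\[
\varphi(u_{i-1,j+1})\,\varphi(u_{i+1,j}) = a_{i,j}a_{i,j+1} - 1 .
\]
Subtracting, we get $a_{i-1,j+1}\bigl(a_{i+1,j} - \varphi(u_{i+1,j})\bigr) = 0$. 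Since $a_{i-1,j+1}\neq 0$ and $R$ is an integral domain, $a_{i+1,j} = \varphi(u_{i+1,j})$.
\end{itemize}
This proves the first bullet. It also shows the frieze is determined by its quiddity row.

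\emph{Well-definedness of the $s_k$.} Apply $\varphi$ to Proposition~\ref{prop::unifrieze}(1). The quantity $u_{rk,j} - u_{rk-2,j+1}$ is independent of $j$, so its image $a_{rk,j} - a_{rk-2,j+1}$ is too. This gives the second bullet. The $r$ used here is the minimal period, which matches the definition of growth coefficients. Nothing more is needed, because the universal statement holds for any period $r$.

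\emph{The Chebyshev recurrence.} The specialized growth coefficients are $s_k = \varphi(s_k^{\mathrm{univ}})$, and $\varphi(2)=2$. Applying $\varphi$ to the identity of Proposition~\ref{prop::unifrieze}(3) gives $s_{k+2} = s_1 s_{k+1} - s_k$ in $R$.

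The only delicate step is the induction in the first bullet. This is the one place where both hypotheses are used: the entries are non-zero, and $R$ has no zero divisors, which is what allows cancelling $a_{i-1,j+1}$. Everything else is transport of polynomial identities along the ring homomorphism $\varphi$.
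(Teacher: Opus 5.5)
Your proposal is correct and takes the same approach as the paper: the unimodularity rule, together with non-vanishing entries in an integral domain, forces the frieze to be determined by its quiddity row and hence to equal the specialization of the universal frieze, after which the second and third bullets follow by transporting Proposition~\ref{prop::unifrieze} along that specialization. Your induction, which cancels the non-zero entry $a_{i-1,j+1}$, is a careful write-up of the step the paper states in a single sentence.
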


\begin{proof}
  If a frieze has non-zero entries in an integral domain, then the quiddity row determines the whole frieze using the unimodularity rule.
  This proves the first point, and the other points follow from Proposition~\ref{prop::unifrieze}.
\end{proof}

\section{Infinite friezes from cluster algebras of affine type}

In this section, we endeavour to take the shortest path to the definition of infinite friezes arising from cluster algebras of affine type.
We will only recall what is strictly necessary, and refer the reader to the references in the text for the full story.

\subsection{Roots, cluster variables and tubes}

Let~$B$ be an acyclic skew-symmetrizable~$n\times n$ matrix whose Cartan counterpart (\cf \cite[Sect. 1.3]{FZ2}) is of affine type.
To~$B$, we associate a cluster algebra (with trivial coefficients) as in~\cite{FZ2}.
Let~$\Phi$ be the corresponding root system,~$\Phi^+$ be the subset of positive roots, and~$W$ be the corresponding Weyl group.
The signs in the matrix~$B$ define a Coxeter element~$c\in W$.
Let~$\Pi=\{\alpha_1, \ldots, \alpha_n\}$ be the set of simple roots and let~$\delta$ be the positive imaginary root closest to the origin.
We define~$\tau_c$ as in~\cite[Sect. 3]{RS20}; it is an automorphism of the set~$-\Pi\cup \Phi^+$.
Following~\cite{RS20}, we define~$\Phi_c^{\re}$ to be the union of the~$\tau_c$-orbits of positive roots that do not have full support.
The set~$\Phi_c^{\re}$ is in bijection with cluster variables:

\begin{theorem}[Theorem 1.2 of \cite{RS20}, \cf \cite{FZ2} for finite types.]
  Let~$B$ be of affine type as above, and let~$\cA$ be the corresponding cluster algebra.
  For any cluster variable~$X$, let~$(d_1^X, \ldots, d_n^X)$ be its~$\bd$-vector.
  Then the map sending~$X$ to~$\sum_{i=1}^n d_i^X\alpha_i$ is a bijection from the set of cluster variables of~$\cA$ to~$\Phi_c^{\re}$.
\end{theorem}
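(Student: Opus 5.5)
We would prove the statement through the (generalized) cluster category, transporting the problem to a question about dimension vectors of rigid indecomposable representations of a tame hereditary algebra. Let~$H$ be the tame hereditary algebra (a path algebra of an acyclic quiver in the skew-symmetric case, a species in general) whose exchange matrix is~$B$, with orientation read off from the signs of~$B$, so that the Coxeter transformation of~$H$ on dimension vectors is the action of~$c$. By the categorification of acyclic cluster algebras (Buan--Marsh--Reiten--Reineke--Todorov for the cluster category, Caldero--Keller for the cluster character, and the species version due to Demonet), cluster variables are in bijection with indecomposable rigid objects of the cluster category~$\mathcal{C}_H$. These are the shifted projectives~$P_i[1]$, corresponding to the initial variables, together with the indecomposable rigid~$H$-modules~$M$.

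Next we use the description of denominators: the initial variable~$x_i$ has $\bd$-vector~$-\alpha_i$, and for a rigid indecomposable module~$M$ the~$\bd$-vector of~$X_M$ equals~$\underline{\dim}\,M$. In the tame acyclic setting this is due to Buan--Marsh--Reiten (denominators of cluster variables), and we would check that their argument only uses the structure of the tubes and therefore carries over to species. Consequently the map~$X\mapsto\sum_i d_i^X\alpha_i$ becomes $P_i[1]\mapsto -\alpha_i$ and $M\mapsto\underline{\dim}\,M$. Injectivity is then immediate: a rigid indecomposable module over a hereditary algebra is determined by its dimension vector, which is a real Schur root, and these vectors are positive and thus distinct from the~$-\alpha_i$.

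It remains to identify the image with~$\Phi_c^{\re}$. The rigid indecomposables split into three classes. The preprojectives~$\tau^{-k}P_i$ come from the~$P_i[1]$ by powers of the Auslander--Reiten translation. The preinjectives~$\tau^kI_i$ behave dually. The regular modules are those lying in the finitely many exceptional tubes and having quasi-length less than the rank of the tube. We check that~$\tau_c$ of~\cite{RS20} is precisely the action of the Auslander--Reiten translation on dimension vectors: it acts by~$c$ away from the roots where~$c$ would change sign, and it swaps~$-\alpha_i$ with the dimension vectors of projectives and injectives. Under this identification, the preprojective, preinjective and~$P_i[1]$ objects form the single~$\tau_c$-orbits through the~$-\alpha_i$. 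Each such orbit contains the dimension vector of an indecomposable projective or injective, and we must show it also contains a positive root without full support. The regular rigid modules form finite~$\tau_c$-orbits inside each exceptional tube, and there we need to exhibit a non-full-support root in each orbit, using the fact that the quasi-simples of a tube of rank~$m$ have dimension vectors summing to~$\delta$. Conversely, we must show that any positive root without full support is real and is the dimension vector of a rigid module. It is real because a root with non-full support lives in a finite-type parabolic subsystem, and it is then the dimension vector of a rigid module because it is an indecomposable of the corresponding finite-type subalgebra.

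The main obstacle is this last matching step, namely showing that the non-full-support roots capture every rigid~$\tau$-orbit and nothing more. In particular we must rule out that a non-rigid regular root, such as a root~$\beta+k\delta$ sitting on a non-exceptional level of a tube, has a~$\tau_c$-orbit meeting a non-full-support root. We would first settle the regular case tube by tube, using the explicit combinatorics of tubes of rank~$m$: the roots~$\beta$ with~$\beta<\delta$ in the orbit all miss some vertex in the support of the complementary quasi-simple. We would then handle the preprojective and preinjective orbits via the~$P_i[1]$.
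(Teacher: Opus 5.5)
The paper gives no proof of this statement. It is imported from \cite{RS20} (with \cite{FZ2} for finite type), where it is obtained inside Reading--Stella's Coxeter-theoretic almost positive roots model. Your route is genuinely different. You categorify by $\mathcal{C}_H$, identify $\bd$-vectors with dimension vectors, identify $\tau_c$ with the Auslander--Reiten translation on dimension vectors (a standard consequence of extending BGP reflection functors to cluster categories), and then match orbits. This buys a conceptual description of $\Phi_c^{\re}$ as $-\Pi$ together with the real Schur roots, and it makes injectivity free. The price is heavy external input. In the non-simply-laced cases you only assert, rather than supply, the transfer to species. What must be available for valued quivers is a cluster character giving the bijection with cluster variables, together with the acyclic-seed denominator theorem for all rigid indecomposables (Caldero--Keller's denominator theorem for quivers), preprojectives and preinjectives included. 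Tube combinatorics alone will not give this, so you should pin it down with a precise reference.

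Granting those inputs, the architecture works, but the orbit-matching paragraph needs three repairs.

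(i) What you call the main obstacle is not one. Your parabolic argument shows that every positive root without full support is $\underline{\dim}\,M$ for a rigid indecomposable $M$. Its whole $\tau_c$-orbit is then the set of dimension vectors of the $\tau$-orbit of $M$ in $\mathcal{C}_H$ (with $P_i[1]\mapsto-\alpha_i$), so no non-Schur root can ever occur in it.

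(ii) The claim you propose for the regular case is false. Take $\tilde{D}_4$ with two arrows into and two arrows out of the central vertex $v_0$. The simple $S_{v_0}$ has defect zero and is quasi-simple in a tube of rank $2$. The other quasi-simple of that tube has dimension vector $\delta-\alpha_{v_0}$, which has full support. Knowing that the quasi-simples sum to $\delta$ is not enough on its own. The missing idea is to use a vertex $w$ with $\delta_w=1$, which exists in every affine type. Exactly one quasi-simple $\beta_j$ of the tube has nonzero $w$-coordinate. Hence any sum of $\ell\le r-1$ consecutive quasi-simples avoiding $\beta_j$ misses $w$. So every finite orbit contains a root without full support, which is all that membership in $\Phi_c^{\re}$ requires.

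(iii) For the $n$ infinite orbits you left the key point open, but it is short. The orbit of $-\alpha_i$ contains both $\underline{\dim}\,P_i$ and $\underline{\dim}\,I_i$. Their supports are, respectively, the vertices reachable from $i$ and the vertices from which $i$ is reachable. If both were full, every $j\neq i$ would lie on an oriented cycle through $i$, contradicting acyclicity.
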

We will write~$X_\beta$ to denote the cluster variable associated to a root~$\beta\in\Phi_c^{\re}$ and set~$X_0=1$.

We now turn to the definition of \emph{truncated tubes}, which will be a convenient way to organize the finite~$\tau_c$-orbits of~$\Phi_c^{\re}$.
According to~\cite[Proposition 3.12]{RS20}, there are exactly~$n-2$ finite~$\tau_c$-orbits in~$\Phi_c^{\re}$.
According to~\cite[Proposition 4.4]{RS20b} and~\cite[Proposition 2.12]{RS20}, these form a finite number of disjoint subsets~$\cT$ with the following properties:
\begin{itemize}
  \item Each~$\cT$ is stable under~$c$ and all~$c$-orbits in~$\cT$ have the same period~$r$.
  \item There exists a root~$\beta_0\in \cT$ such that, if we write~$\beta_i = c^k\beta_0$ for all $k\in\bZ$, then all other roots in $\cT$ are of the form $\sum_{j=0}^{s} \beta_{i+j}$ for some $0\leq s < r-1$ and some $i\in\bZ$.\footnote{
    In~\cite{RS20}, this is stated as follows.
    Let~$\alpha_{\aff}$ be a choice of an affine simple root, and~$\Phi_{\fin}\subset\Phi$ be the corresponding finite parabolic root system.
    Then the set of roots in~$\Phi_{\fin}$ with finite~$c$-orbit is a disjoint union of finitely many root systems of Dynkin type~$A$.
    Our roots~$\beta_1, \ldots, \beta_{r-1}$ are the simple roots for one of these systems while~$\beta_r=\delta-(\beta_1+ \cdots + \beta_{r-1})$
  }
\end{itemize}

\begin{definition}
  The \emph{truncated tubes} of~$B$ are the sets~$\cT$ described above.
  The size~$r$ of all~$c$-orbits in~$\cT$ is the \emph{rank} of~$\cT$.
\end{definition}

We will depict truncated tubes as follows (the example shown is of rank~$4$).
{\small
  \begin{center}
    \begin{tikzcd}[column sep=0.3em, row sep=0.9em, arrows=dash]
      & \beta_1 \arrow[dl] \arrow[dr] && \beta_2 \arrow[dl] \arrow[dr] && \beta_3 \arrow[dl] \arrow[dr] && \beta_4 \arrow[dl] \\
      \beta_4+\beta_1  \arrow[dr] && \beta_1+\beta_2 \arrow[dl] \arrow[dr] && \beta_2+\beta_3 \arrow[dl] \arrow[dr] && \beta_3+\beta_4 \arrow[dl] \arrow[dr] &\\
      & \beta_4+\beta_1+\beta_2 && \beta_1+\beta_2+\beta_3 && \beta_2+\beta_3+\beta_4 && \beta_3+\beta_4+\beta_1 &&  
    \end{tikzcd}
  \end{center}
}
The top row is called the \emph{mouth of the truncated tube}.
Note that each diamond
\begin{center}
  \begin{tikzcd}[column sep=tiny, row sep=tiny, arrows=dash ]
    & \gamma_1 \arrow[dl] \arrow[dr] & \\
    \gamma_2 \arrow[dr] & & \gamma_3\arrow[dl] \\
    & \gamma_4 &
  \end{tikzcd}
\end{center}
in the picture satisfies the relation~$\gamma_1+\gamma_4 = \gamma_2+\gamma_3$ (this includes the case where~$\gamma_2$ and~$\gamma_3$ are at the mouth of the tube, in which case~$\gamma_1=0$).
Note further that if we were to continue to fill another row by this diamond rule all its entries would equal~$\delta$.

\begin{example}\label{exam::f4-roots}
  Using the matrix~$B = \begin{bmatrix} 0&1&0&0&0 \\
    -1&0&2&0&0 \\
    0&-1&0&1&0 \\
    0&0&-1&0&1 \\
    0&0&0&-1&0
  \end{bmatrix}$ 
  of affine type~$F_4$, we get a truncated tube of rank~$2$ whose mouth is~$2\alpha_2 + \alpha_3 + \alpha_4, 2\alpha_1 + 2\alpha_2 + 2\alpha_3+\alpha_4+\alpha_5$ and one of rank~$3$ whose mouth is~$\alpha_1+\alpha_2+\alpha_3+\alpha_4, \alpha_2+\alpha_3, \alpha_1+2\alpha_2+\alpha_3+\alpha_4+\alpha_5$.
\end{example}

\begin{remark}
  (Truncated) tubes appear naturally in the representation theory of hereditary tame algebras, see for instance~\cite{SS} for affine types $ADE$ using representations of quivers, and~\cite{SY} for all affine types using representations of species.
\end{remark}

The following result will justify our definition of infinite friezes for cluster algebras of affine types in the next section.

\begin{proposition}
  Let~$\cT$ be a truncated tube, and let~$\gamma_1, \gamma_2, \gamma_3, \gamma_4\in \cT$ form a diamond as above.
  Then in~$\cA$ there is an exchange relation of the form~$X_{\gamma_2}X_{\gamma_3} = X_{\gamma_1}X_{\gamma_4} +1$.
\end{proposition}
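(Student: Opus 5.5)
We prove the exchange relation~$X_{\gamma_2}X_{\gamma_3}=X_{\gamma_1}X_{\gamma_4}+1$ by identifying~$X_{\gamma_2}$ and~$X_{\gamma_3}$ as an exchange pair, identifying the two monomials in the exchange relation via~$\bd$-vectors, and reducing to the mouth of the tube by~$\tau_c$-equivariance.

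\textbf{Step 1: compatibility.} Recall from~\cite{RS20} that there is a $c$-compatibility degree on~$\Phi_c^{\re}$ matching compatibility of cluster variables. The first step is to check, via this degree, that~$\gamma_2$ and~$\gamma_3$ are exchangeable: they are not compatible, but each is compatible with every other root of~$\cT$ it needs to be. We also check that~$\gamma_1$ and~$\gamma_4$ are compatible with each other and with a common almost-complete cluster containing neither~$\gamma_2$ nor~$\gamma_3$.

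\textbf{Step 2: shape of the exchange relation.} Since the cluster algebra has trivial coefficients, the exchange relation for the pair~$X_{\gamma_2},X_{\gamma_3}$ has the form~$X_{\gamma_2}X_{\gamma_3}=M_1+M_2$, with~$M_1,M_2$ cluster monomials. We identify these by~$\bd$-vectors, using that~$\bd$-vectors are additive on products. Since~$\gamma_2+\gamma_3=\gamma_1+\gamma_4$, the natural candidate is~$M_1=X_{\gamma_1}X_{\gamma_4}$. The other monomial should then have~$\bd$-vector~$0$, that is,~$M_2=1$. In the mouth case~$\gamma_1=0$, so~$X_{\gamma_1}=1$.

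\textbf{Step 3: using the $\tau_c$-action.} The cleanest route is to work inside the tube structure. The rotation~$\tau_c$ (equivalently~$c$ on~$\cT$) lifts to an automorphism of the exchange graph (\cite{RS20}), so it suffices to verify one diamond per row position. We then argue by induction down the tube:
\begin{itemize}
  \item \emph{Base case (mouth).} Here~$X_{\beta_i}X_{\beta_{i+1}}=X_{\beta_i+\beta_{i+1}}+1$. This is the relation of a rank-$2$ finite type~$A_2$ seed obtained by freezing or restricting to the parabolic subsystem of type~$A$ spanned by~$\beta_1,\dots,\beta_{r-1}$, as in the footnote above. There, cluster variables are indexed by positive roots and the relation is classical~\cite{FZ2}.
  \item \emph{Inductive step.} Deeper rows follow by the same finite type~$A_{r-1}$ argument, since all roots appearing in the truncated tube with length less than~$r$ lie in a type~$A$ subsystem up to rotation by~$c$.
\end{itemize}

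\textbf{Main obstacle.} The hardest step is justifying that exchange relations in the parabolic (type~$A$) subsystem really are exchange relations in~$\cA$ with trivial coefficients. The subcluster algebra obtained by freezing carries frozen variables, which might appear in the exchange binomial. The first attempt would be to choose the freezing via a seed where the complementary variables correspond to roots orthogonal (in the compatibility sense) to the tube. In that case the frozen variables do not appear in these particular relations. This is confirmed by comparing~$\bd$-vectors (or denominators): any frozen factor would add a nonzero~$\bd$-vector contribution outside the support of~$\gamma_1+\gamma_4$, contradicting~$\gamma_2+\gamma_3=\gamma_1+\gamma_4$.
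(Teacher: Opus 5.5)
\textbf{There is a genuine gap.} None of your three steps actually computes the exchange binomial. The step meant to do so, the reduction to type $A$ in Step~3, rests on a false premise.

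The type $A_{r-1}$ systems in the footnote sit inside $\Phi_{\fin}$, but they are not parabolic subsystems of $\Phi$: their simple roots $\beta_j$ are in general not simple roots of $\Phi$. In Example~\ref{exam::f4-roots}, the two mouth roots of the rank-$3$ tube lying in $\Phi_{\fin}$ are $\alpha_2+\alpha_3$ and $\alpha_1+\alpha_2+\alpha_3+\alpha_4$. Freezing variables of the initial seed only ever produces the parabolic subsystems generated by subsets of $\Pi$. Freezing the affine vertex, for instance, gives finite type $F_4$. It never gives a type-$A_2$ cluster algebra whose cluster variables are labelled by $\beta_1,\beta_2,\beta_1+\beta_2$.

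To find a type-$A_{r-1}$ piece you would have to pass to some other seed, e.g.\ via Calabi--Yau reduction. But the labels $\gamma$ are $\bd$-vectors with respect to the original initial seed, so they are no longer the positive roots of that type-$A$ algebra, and ``the relation is classical'' has nothing to attach to. Two smaller problems in Step~3: your ``inductive step'' never uses an inductive hypothesis, and an automorphism of the exchange graph does not by itself transport exchange binomials. For the latter you would need a genuine cluster automorphism, such as the one induced by the Coxeter mutation sequence.

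Steps~1--2 and the ``main obstacle'' paragraph cannot fill this hole.
\begin{itemize}
\item Exchangeability of $X_{\gamma_2},X_{\gamma_3}$ is only asserted, and incompatibility alone does not give it. The same goes for the existence of an almost-complete cluster containing $X_{\gamma_1},X_{\gamma_4}$.
\item $\bd$-vectors cannot identify the binomial. For Laurent polynomials with positive coefficients, the $\bd$-vector of $M_1+M_2$ is the componentwise maximum of $\bd(M_1)$ and $\bd(M_2)$. So $\gamma_2+\gamma_3=\gamma_1+\gamma_4$ does not force $M_1=X_{\gamma_1}X_{\gamma_4}$. Even granting that, it only gives $\bd(M_2)\le\gamma_2+\gamma_3$; $M_2$ could, for example, be a product of initial variables, which have negative $\bd$-vectors.
\item For the same reason, a frozen factor sitting in the second monomial is invisible to your $\bd$-vector comparison.
\item ``Compatible with the tube'' cannot keep a variable out of the relation. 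The variables occurring in an exchange binomial belong to the common almost-complete cluster, so they are compatible with both exchanged variables.
\end{itemize}

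The paper does not attempt any of this. It imports the relation, established with principal coefficients as \cite[Equation (4.3)]{RRS25}, and specializes the coefficients to $1$. A self-contained route would need an actual computation of the binomial. One candidate is a Caldero--Chapoton-type mesh relation $X_{\tau M}X_M=X_E+1$ for almost split sequences in the tube, via categorification in the skew-symmetric case or a species analogue in general.
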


\begin{proof}
  The claim in this proposition was established  in the case of principal coefficients and with different notations as \cite[Equation (4.3)]{RRS25} in the proof of \cite[Proposition 4.42]{RRS25}.
  Our case follows by evaluating all coefficients to 1.
\end{proof}

\subsection{Elements associated to imaginary roots}

We will need the following distinguished element of the cluster algebra~$\cA$.

\begin{definition-lemma}
  Let~$\beta_i$ be a root in the mouth of a truncated tube.
  Then the element
  \[
    X_\delta = X_{\beta_i} X_{\delta-\beta_i} - X_{\delta-\beta_i-\beta_{i-1}} - X_{\delta-\beta_i-\beta_{i+1}}
  \]
  is independent of the root~$\beta_i$ and of the truncated tube containing it.
\end{definition-lemma}

\begin{proof}
  The element $X_\delta$ is the \emph{theta basis element} whose denominator vector is $\delta$; despite the similarity in notation it is not a cluster variable in~$\cA$.
  The claimed identity was established in the case of principal coefficients and with different notation as \cite[Theorem~3.1]{RS26}.
  Our case follows by evaluating all coefficients to 1.
\end{proof}

\begin{definition-lemma}
  \label{deflem::highertheta}
  The theta basis elements~$X_{k\delta}$ whose~$\bd$-vector is~$k\delta$ satisfy the recursion
  \begin{align*}
    X_{2\delta} &= X_\delta^2 - 2 \\
    X_{(k+2)\delta} &= X_\delta X_{(k+1)\delta} - X_{k\delta}
  \end{align*}
  for~$k\geq1$.
\end{definition-lemma}

\begin{proof}
  Again this is a result of \cite{RS26}, namely Theorem~(3.3), adapted to the coefficient-free case.
\end{proof}

\subsection{Infinite friezes from cluster algebras of affine type}

\begin{definition}\label{defi::friezetube}
  Let~$B$ be an acyclic skew-symmetrizable~$n\times n$ matrix whose Cartan counterpart is of affine type.
  Let~$(\cT_i)_{i\in I}$ be the finite collection of truncated tubes of~$B$.
  For each~$i\in I$, let~$r_i$ be the rank of~$\cT_i$ and let~$X_{\beta_1^{(i)}}, \ldots, X_{\beta_{r_i}^{(i)}}$ be the cluster variables associated to the roots~$\beta_1^{(i)}, \ldots, \beta_{r_i}^{(i)}$ at the mouth of~$\cT_i$.

  Then we denote by~$\cT_i^{\bu}$ the infinite frieze obtained by taking the universal infinite frieze of rank~$r_i$ and specializing the variables~$z_1, \ldots, z_{r_i}$ at the cluster variables~$X_{\beta_1^{(i)}}, \ldots, X_{\beta_{r_i}^{(i)}}$.
  It is a frieze with values in~$\bZ[x_1^{\pm 1}, \ldots, x_n^{\pm 1}]$, where~$x_1, \ldots, x_n$ are the initial cluster variables.
\end{definition}

\begin{example}\label{exam::f4-variables}
  We continue Example~\ref{exam::f4-roots}.
  The cluster variables appearing at the mouth of the tubes are
  {\tiny
    \begin{eqnarray*}
      X_{2\alpha_2 + \alpha_3 + \alpha_4}
      &=& \frac{x_1^2x_2^2x_3 + x_1^2x_2^2x_5 + x_1^2x_4x_5 + 2x_1x_3x_4x_5 + x_3^2x_4x_5}{x_2^2x_3x_4}\\
      X_{2\alpha_1 + 2\alpha_2 + 2\alpha_3+\alpha_4+\alpha_5}
      &=& (x_1^2x_2^2x_3^2x_4x_5)^{-1}(x_1^2x_2^4x_3x_4 + x_1^2x_2^4x_3 + x_1^2x_2^2x_3x_4^2 + x_1^2x_2^4x_5 + 2x_1x_2^3x_3x_4x_5 + \\ && \ +x_2^2x_3^2x_4^2x_5 + x_1^2x_2^2x_3x_4 +  2x_1^2x_2^2x_4x_5 + 2x_1x_2^2x_3x_4x_5 + 2x_1x_2x_3x_4^2x_5 + 2x_2x_3^2x_4^2x_5 + \\ && \ +x_1^2x_4^2x_5 + 2x_1x_3x_4^2x_5 + x_3^2x_4^2x_5)\\
      X_{\alpha_1+\alpha_2+\alpha_3+\alpha_4}
      &=& \frac{x_1x_2^2x_3 + x_1x_2^2x_5 + x_2x_3x_4x_5 + x_1x_4x_5 + x_3x_4x_5}{x_1x_2x_3x_4}\\
      X_{\alpha_2+\alpha_3}
      &=& \frac{x_1x_2^2 + x_1x_4 + x_3x_4}{x_2x_3}\\
      X_{\alpha_1+2\alpha_2+\alpha_3+\alpha_4+\alpha_5}
      &=& \frac{x_1^2x_2^2x_3x_4 + x_1^2x_2^2x_3 + x_1^2x_2^2x_5 + x_1x_2x_3x_4x_5 + x_2x_3^2x_4x_5 + x_1^2x_4x_5 + 2x_1x_3x_4x_5 + x_3^2x_4x_5}{x_1x_2^2x_3x_4x_5}.
    \end{eqnarray*}
  }
  Specializing all the~$x_i$ to~$1$, we recover the two infinite friezes of Example~\ref{exam::initial}.
\end{example}

We can now state the main result of this note.

\begin{theorem}\label{theo::main}
  Let~$B$ be an acyclic skew-symmetrizable~$n\times n$ matrix whose Cartan counterpart is of affine type.
  Let~$(\cT_i)_{i\in I}$ the finite collection of tubes of~$B$.
  Then the friezes~$\cT_i^{\bu}$ all have growth coefficient~$X_\delta$.
\end{theorem}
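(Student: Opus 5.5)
We have to show that for each tube $\cT$ of rank $r$, with mouth $\beta_1,\ldots,\beta_r$, the growth coefficient $s_1=a_{r,j}-a_{r-2,j+1}$ of $\cT^{\bu}$ equals $X_\delta$. By Proposition~\ref{prop::unifrieze}(1) and the Corollary, we may compute it for any convenient~$j$, say $j=i$ where $\beta_i$ is a mouth root.

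The plan is to identify the frieze entries with cluster variables of the tube. First, I will show by induction on the row index $\ell$ that, for $1\le \ell\le r-1$, the entry $a_{\ell,j}$ of $\cT^{\bu}$ equals $X_{\beta_j+\beta_{j+1}+\cdots+\beta_{j+\ell-1}}$, with indices taken mod $r$.
\begin{itemize}
\item For $\ell=1$ this holds by definition.
\item For $\ell=0$ we have $a_{0,j}=1=X_0$.
\item For the inductive step, both sides satisfy the same relation. The frieze obeys the unimodularity rule $a_{\ell,j}a_{\ell,j+1}=a_{\ell-1,j+1}a_{\ell+1,j}+1$. The cluster variables obey the exchange relation $X_{\gamma_2}X_{\gamma_3}=X_{\gamma_1}X_{\gamma_4}+1$ from the Proposition on diamonds of the truncated tube.
\end{itemize}
Since $X_{\gamma_1}\neq 0$ in the Laurent ring, which is an integral domain, the next row is determined. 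This gives the identification up to row $r-1$, because the truncated tube has rows of lengths $1,\ldots,r-1$ (sums of $s+1\le r-1$ consecutive $\beta$'s). In particular, $a_{r-1,j+1}=X_{\delta-\beta_j}$, since the sum of all $\beta$'s except $\beta_j$ is $\delta-\beta_j$. Likewise $a_{r-2,j+1}=X_{\delta-\beta_j-\beta_{j-1}}$ and $a_{r-2,j+2}=X_{\delta-\beta_j-\beta_{j+1}}$, all read cyclically.

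Next, I will compute $a_{r,j}$ using Proposition~\ref{prop::unifrieze}(2) with $\ell=1$:
\[
a_{r,j}=a_{1,j}a_{r-1,j+1}-a_{0,j}a_{r-2,j+2}=X_{\beta_j}X_{\delta-\beta_j}-X_{\delta-\beta_j-\beta_{j+1}}.
\]
Then
\[
s_1=a_{r,j}-a_{r-2,j+1}=X_{\beta_j}X_{\delta-\beta_j}-X_{\delta-\beta_j-\beta_{j+1}}-X_{\delta-\beta_j-\beta_{j-1}},
\]
which is exactly $X_\delta$ by the Definition-Lemma. The Definition-Lemma also guarantees that this value is independent of the tube. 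Finally, by Proposition~\ref{prop::unifrieze}(3) and Definition-Lemma~\ref{deflem::highertheta}, the $s_k$ and the $X_{k\delta}$ satisfy the same Chebyshev recursion with the same initial data. Hence $s_k=X_{k\delta}$ for all $k$, and all the growth coefficients coincide across tubes.

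The main obstacle is the bookkeeping in the inductive identification: matching the frieze's indexing conventions (diamond orientation, shift $j$ vs.\ $j+1$, and cyclic indices) with the tube's diamonds. Particular care is needed at the mouth, where $\gamma_1=0$, and in the last rows, where the relevant roots must be recognized as $\delta$ minus one or two consecutive mouth roots. I would first check this on the $F_4$ example to fix the orientation, then write the induction with those conventions.

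The degenerate case $r=1$ should not occur, since tubes with finite orbits in the truncated sense have rank at least $2$. The case $r=2$ needs the convention $a_{0}=1=X_0$, where $\delta-\beta_j-\beta_{j\pm1}=0$; it is handled by the same formula.
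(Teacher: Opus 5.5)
Your proof is correct, but it uses a different cluster-algebra identity from the paper.

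\textbf{The paper's route.} It obtains the row-$r$ entry from rows $r-1$ and $r-2$ by unimodularity, $a_{r,j}=(X_{\delta-\beta_{j-1}}X_{\delta-\beta_j}-1)/X_{\delta-\beta_{j-1}-\beta_j}$. It then applies the product formula of Theorem~\ref{theo::rs}, the special case of \cite[Theorem 3.8]{RS26} for two adjacent variables in row $r-1$, to read off $a_{r,j}-a_{r-2,j+1}=X_\delta$.

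\textbf{Your route.} You expand $a_{r,j}$ along the first row with the continuant identity of Proposition~\ref{prop::unifrieze}(2) at $\ell=1$, giving $a_{r,j}=X_{\beta_j}X_{\delta-\beta_j}-X_{\delta-\beta_j-\beta_{j+1}}$. After that, $s_1=X_\delta$ is literally the defining identity of $X_\delta$ in the Definition-Lemma.

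\textbf{What each buys.} Your argument needs less cluster-theoretic input. It uses only the identity the paper already needs to define $X_\delta$ and to know it is independent of the tube, so Theorem~\ref{theo::rs} is never used. In fact, combining your computation with unimodularity in rows $r-2,r-1,r$ recovers the identity of Theorem~\ref{theo::rs} from the Definition-Lemma. The paper's argument, in exchange, uses only the unimodularity rule on the frieze side, with no continuant expansion. It also ties the growth coefficient to the theta-function product expansion of row-$(r-1)$ variables, which is the identity the note wants to showcase.

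\textbf{Further remarks.} Your induction identifying rows $1,\dots,r-1$ of $\cT^{\bu}$ with the truncated tube fills in a step the paper's proof asserts without comment. It uses the diamond exchange relations with $X_0=1$ at the mouth, and cancels $X_{\gamma_1}$ in the Laurent ring, which is a domain. Your index bookkeeping, including the case $r=2$, is right. One small simplification: independence of $j$ follows directly by specializing Proposition~\ref{prop::unifrieze}(1), and your computation gives $X_\delta$ for every $j$ anyway. So the minimal-period Corollary is not needed.
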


Theorem~\ref{theo::main} recovers known results for friezes of positive integers in affine type~$A$ \cite[Theorem 3.4]{BFPT}, affine type~$D$~\cite[Theorem 5.5]{BBGTY}, and affine types~$ADE$ \cite{BFPTY}.

\begin{example}
  We continue with Example~\ref{exam::f4-variables}.
  The two friezes obtained from that cluster algebra of affine type~$F_4$ both have growth coefficient
  {\tiny
    \begin{eqnarray*}
      X_\delta &=& (x_1^2x_2^4x_3^3x_4^2x_5)^{-1}(x_1^4x_2^6x_3^2x_4 + x_1^4x_2^6x_3x_4x_5 + x_1^4x_2^6x_3^2 + x_1^4x_2^4x_3^2x_4^2 + 2x_1^4x_2^6x_3x_5 + 2x_1^3x_2^5x_3^2x_4x_5 + 2x_1^4x_2^4x_3x_4^2x_5 +  \\ && \ +2x_1^3x_2^4x_3^2x_4^2x_5 + x_1^4x_2^6x_5^2 +  2x_1^3x_2^5x_3x_4x_5^2 + x_1^2x_2^4x_3^2x_4^2x_5^2 + x_1^4x_2^4x_3^2x_4 + 4x_1^4x_2^4x_3x_4x_5 + 4x_1^3x_2^4x_3^2x_4x_5 +  \\ && \ +x_1^2x_2^4x_3^3x_4x_5 + 2x_1^3x_2^3x_3^2x_4^2x_5 + 2x_1^2x_2^3x_3^3x_4^2x_5 + x_1^4x_2^2x_3x_4^3x_5 + 2x_1^3x_2^2x_3^2x_4^3x_5 + x_1^2x_2^2x_3^3x_4^3x_5 + 3x_1^4x_2^4x_4x_5^2 + \\ && \ + 4x_1^3x_2^4x_3x_4x_5^2 + x_1^2x_2^4x_3^2x_4x_5^2 + 4x_1^3x_2^3x_3x_4^2x_5^2 + 6x_1^2x_2^3x_3^2x_4^2x_5^2 + 2x_1x_2^3x_3^3x_4^2x_5^2 + x_1^2x_2^2x_3^2x_4^3x_5^2 + 2x_1x_2^2x_3^3x_4^3x_5^2 + \\ && \ + x_2^2x_3^4x_4^3x_5^2 + 2x_1^4x_2^2x_3x_4^2x_5 + 4x_1^3x_2^2x_3^2x_4^2x_5 + 2x_1^2x_2^2x_3^3x_4^2x_5 + 3x_1^4x_2^2x_4^2x_5^2 + 8x_1^3x_2^2x_3x_4^2x_5^2 + 7x_1^2x_2^2x_3^2x_4^2x_5^2 +  \\ && \ +2x_1x_2^2x_3^3x_4^2x_5^2 + 2x_1^3x_2x_3x_4^3x_5^2 + 6x_1^2x_2x_3^2x_4^3x_5^2 + 6x_1x_2x_3^3x_4^3x_5^2 + 2x_2x_3^4x_4^3x_5^2 + x_1^4x_4^3x_5^2 + 4x_1^3x_3x_4^3x_5^2 + 6x_1^2x_3^2x_4^3x_5^2 +  \\ && \ +4x_1x_3^3x_4^3x_5^2 + x_3^4x_4^3x_5^2).
    \end{eqnarray*}
  }
  Specializing the~$x_i$ to~$1$, we recover~$118$, the growth coefficient of the friezes in Example~\ref{exam::initial}.
\end{example}

\begin{corollary}
  Keep the assumptions of Theorem~\ref{theo::main}.
  Let~$\ba = (a_1, \ldots, a_n)$ be an $n$-tuple of real numbers satisfying the following \emph{specialization condition}:
  \begin{description}
    \item[(*)] if we specialize the initial cluster variables to~$a_1, \ldots, a_n$, then the variables at the mouth of the tubes~$\cT_i$ become quiddity sequences for infinite friezes~$\cT_i^{\ba}$ of positive integers.
  \end{description}
  Then the friezes~$\cT_i^{\ba}$ all have the same growth coefficient, and that growth coefficient is the specialization of~$X_\delta$ at~$a_1, \ldots, a_n$.
\end{corollary}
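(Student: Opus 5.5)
The plan is to deduce the statement from Theorem~\ref{theo::main} by specialization. Since evaluation is a ring homomorphism, it commutes with the polynomial expressions defining frieze entries and growth coefficients.

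\textbf{Step 1: the evaluation map.} Condition (*) implicitly requires the mouth variables to be evaluable at~$\ba$. I will therefore assume, as is implicit, that all~$a_j$ are non-zero. Then there is a ring homomorphism
\[
  \varphi_{\ba}\colon \bZ[x_1^{\pm1},\ldots,x_n^{\pm1}]\to\mathbb{R},\qquad x_j\mapsto a_j .
\]
Applying~$\varphi_{\ba}$ entrywise to~$\cT_i^{\bu}$ gives an array in~$\mathbb{R}$. It satisfies the unimodularity relation, because that relation is polynomial and~$\varphi_{\ba}(1)=1$. It is~$r_i$-periodic, and its quiddity row is~$\varphi_{\ba}(X_{\beta_1^{(i)}}),\ldots,\varphi_{\ba}(X_{\beta_{r_i}^{(i)}})$.

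\textbf{Step 2: identify it with~$\cT_i^{\ba}$.} By (*), this quiddity row is also the quiddity row of the frieze~$\cT_i^{\ba}$ of positive integers. By the first point of the Corollary to Proposition~\ref{prop::unifrieze}, applied in the integral domain~$\mathbb{R}$, a frieze with non-zero entries is determined by its quiddity row. Both friezes are specializations of the universal~$r_i$-periodic frieze at the same values, so~$\varphi_{\ba}(\cT_i^{\bu})=\cT_i^{\ba}$ entrywise.

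\textbf{Step 3: transport the growth coefficient.} The growth coefficient of~$\cT_i^{\ba}$ with respect to the period~$r_i$ is~$a_{r_i,j}-a_{r_i-2,j+1}$. By Step~2 this equals~$\varphi_{\ba}$ applied to the corresponding expression in~$\cT_i^{\bu}$. By Theorem~\ref{theo::main}, that expression is~$X_\delta$. Hence the growth coefficient is~$\varphi_{\ba}(X_\delta)$ for every~$i$, which is the claim.

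\textbf{Main obstacle: minimal period.} The definition of growth coefficients uses the minimal period, and the minimal period~$r'$ of~$\cT_i^{\ba}$ could be a proper divisor of~$r_i$, with~$r_i=mr'$. In that case the quantity computed above is the~$m$-th growth coefficient~$s_m$ relative to~$r'$, not~$s_1$. I would handle this as follows.
\begin{itemize}
  \item First, check whether the statement is intended with respect to the rank~$r_i$, as in Theorem~\ref{theo::main}. If so, no further work is needed.
  \item Otherwise, note the relation through the Chebyshev recurrence of Proposition~\ref{prop::unifrieze}(3). That recurrence gives~$s_m=T_m(s_1)$, where~$T_m$ is the normalized Chebyshev polynomial determined by~$s_0=2$ and~$s_{k+2}=s_1s_{k+1}-s_k$.
  \item Then argue that for friezes of positive integers the relevant~$s$-values are at least~$2$. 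On that range~$T_m$ is strictly increasing, so~$s_1$ is determined by~$s_m$. Consequently, if two friezes share the same~$r_i$-growth coefficient and the same ratio~$m$, they share the same minimal-period growth coefficient. This last point needs care when the tubes have different ranks, and I expect it to be the delicate part of the argument.
\end{itemize}
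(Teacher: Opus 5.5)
Your Steps 1--3 are the argument the paper intends. The paper gives no separate proof: the corollary is Theorem~\ref{theo::main} pushed through the evaluation homomorphism $x_j\mapsto a_j$. Your assumption $a_j\neq 0$ is in fact forced by (*). The mouth roots of a tube sum to $\delta$, which has full support, so each $x_j$ divides the reduced denominator of some mouth variable.

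Where you should not hedge is the minimal period. The ``otherwise'' branch of your plan is not merely delicate; it cannot work, because with growth coefficients taken relative to the minimal period the corollary is false. Take affine type $D_4$ with centre $5$ and leaves $1,2,3,4$ all sources. There are three truncated tubes, all of rank~$2$, with mouths
\[
\{\alpha_5+\alpha_1+\alpha_2,\ \alpha_5+\alpha_3+\alpha_4\},\quad \{\alpha_5+\alpha_1+\alpha_3,\ \alpha_5+\alpha_2+\alpha_4\},\quad \{\alpha_5+\alpha_1+\alpha_4,\ \alpha_5+\alpha_2+\alpha_3\},
\]
and a Caldero--Chapoton computation (or a short mutation sequence) gives
\[
  X_{\alpha_5+\alpha_i+\alpha_j}=\frac{x_1x_2x_3x_4+(1+x_5)^2}{x_5x_ix_j}.
\]
Take $\ba=(1,2,1,2,1)$:
\begin{itemize}
  \item The three quiddity rows become $(4,4)$, $(8,2)$ and $(4,4)$.
  \item All entries are at least $2$, so all frieze entries are positive integers and (*) holds.
  \item $\varphi_{\ba}(X_\delta)=4\cdot 4-2=8\cdot 2-2=14$.
\end{itemize}
The first and third friezes have minimal period $1$. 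Relative to it their growth coefficient is $4$, consistent with the paper's remark on rank-$1$ tubes, and $14$ is their $s_2$. The second frieze has minimal period $2$ and growth coefficient $14$. So the ratios $r_i/r_i'$ can differ between tubes for a single $\ba$. Under the minimal-period reading both assertions of the corollary fail, and no monotonicity argument on Chebyshev polynomials can rescue it.

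The statement must therefore be read with respect to the rank $r_i$, i.e.\ as the value of $a_{r_i,j}-a_{r_i-2,j+1}$. The rank $r_i$ is also the minimal period of $\cT_i^{\bu}$ itself, since the mouth variables are distinct cluster variables. Under that reading, your Steps 1--3 are a complete proof, and the last bullet of your plan should be dropped rather than completed.
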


\section{Proof of the main result}

Our main tool is the following special case of \cite[Theorem 3.8]{RS26}.

\begin{theorem}\label{theo::rs}
  Let~$\beta_i$ be a root in the mouth of a truncated tube.
  Then
  \[
    X_{\delta-\beta_{i-1}} X_{\delta-\beta_{i}} = X_{\delta-\beta_{i-1}-\beta_i}^2 + X_{\delta} X_{\delta-\beta_{i-1}-\beta_i} + 1.
  \]
\end{theorem}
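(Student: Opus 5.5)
The identity is stated as a special case of \cite[Theorem 3.8]{RS26}, and I will prove it the same way the paper handles the earlier theta-function facts: take the principal-coefficient identity from \cite{RS26} and evaluate all coefficients at~$1$. So the plan is to translate notation rather than build a new argument.

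\textbf{Step 1: find the right instance of \cite[Theorem 3.8]{RS26}.} That theorem gives, in principal coefficients, a product formula for two theta functions whose $\bd$-vectors are $\delta-\beta_{i-1}$ and $\delta-\beta_i$, with $\beta_{i-1},\beta_i$ adjacent roots at the mouth of a truncated tube. I will check three things.
\begin{itemize}
\item The elements $X_{\delta-\beta_{i-1}}$, $X_{\delta-\beta_i}$ and $X_{\delta-\beta_{i-1}-\beta_i}$ are cluster variables. They sit in the bottom rows of the truncated tube $\cT$; the roots there are sums of at most $r-1$ consecutive $\beta$'s, and $\delta-\beta_j=\beta_{j+1}+\cdots+\beta_{j+r-1}$.
\item By \cite{RS20}, these cluster variables are the theta functions with those $\bd$-vectors.
\item The right-hand side of \cite{RS26} consists of the square term, the product with $X_\delta$, and a monomial in the coefficients; that monomial becomes $1$ after specialization.
\end{itemize}
When $r=2$ the root $\delta-\beta_{i-1}-\beta_i$ is $0$, so I must also confirm that the convention $X_0=1$ matches.

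\textbf{Step 2 (alternative if the citation does not line up).} I would derive the identity inside the paper from the tube exchange relations and the Definition-Lemma for~$X_\delta$. Put $u=\delta-\beta_{i-1}-\beta_i$, the sum of the $r-2$ mouth roots other than $\beta_{i-1},\beta_i$. Applying the Definition-Lemma at $\beta_{i-1}$ and at $\beta_i$ gives
\[
X_\delta = X_{\beta_{i-1}}X_{\delta-\beta_{i-1}} - X_{\delta-\beta_{i-1}-\beta_{i-2}} - X_{u},
\]
and the analogous expression centred at $\beta_i$. I would then use the diamond exchange relations in the bottom rows of the tube. These are the frieze relations of $\cT^{\bu}$, so Proposition~\ref{prop::unifrieze}(2) and Proposition~\ref{prop::explicit-formula} let me write every $X_\gamma$ in $\cT$ as a continuant in the mouth variables $z_j=X_{\beta_j}$. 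Concretely, $X_{\beta_a+\cdots+\beta_b}=a_{b-a+1,a}$, which gives
\[
X_{\delta-\beta_{i-1}}=a_{r-1,i},\qquad X_{\delta-\beta_i}=a_{r-1,i+1},\qquad X_u=a_{r-2,i+1},
\]
while $X_\delta$ becomes the cyclic continuant $a_{r,i}-a_{r-2,i+1}$.

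The claim then reduces to the polynomial identity
\[
a_{r-1,i}\,a_{r-1,i+1}=a_{r-2,i+1}^2+(a_{r,i}-a_{r-2,i+1})\,a_{r-2,i+1}+1,
\]
with indices taken modulo $r$. Its right-hand side equals $a_{r,i}a_{r-2,i+1}+1$. So the identity is exactly the unimodularity (Desnanot--Jacobi) relation for the diamond with top $a_{r-1,i}$, $a_{r-1,i+1}$ in the universal $r$-periodic frieze, using $a_{r-1,i+1}=a_{r-1,i+1}$ after reduction mod $r$.

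\textbf{Main obstacle.} The risky step is justifying that the tube elements are the continuants in the mouth variables. Each diamond relation $X_{\gamma_2}X_{\gamma_3}=X_{\gamma_1}X_{\gamma_4}+1$ (the Proposition above) determines the entries row by row, because we work in an integral domain of Laurent polynomials with nonzero entries. The subtle part is that the bottom-row variables, which involve $r-1$ summands, are still genuine members of $\cT$ whose diamonds satisfy the stated exchange relations; this is where the definition of the truncated tube and the cited result from \cite{RRS25} enter. The rest is bookkeeping of indices modulo $r$.
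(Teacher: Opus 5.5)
Your Step~1 is the paper's proof. The paper quotes \cite[Theorem~3.8]{RS26}, keeps only the adjacent (diamond) case, and sets all coefficients to~$1$.

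Your Step~2 takes a genuinely different route, and it is correct.
\begin{itemize}
\item \emph{The obstacle you flag is not one.} The rows of $\cT$ go down to sums of $r-1$ consecutive $\beta$'s, and the diamond Proposition covers every diamond inside $\cT$. Put $z_j=X_{\beta_j}$ and induct on rows, cancelling the nonzero top entry of each diamond; this gives $X_{\beta_a+\cdots+\beta_b}=a_{b-a+1,a}$ on rows $1,\dots,r-1$.
\item \emph{Write out the step you only assert.} The Definition-Lemma at $\beta_i$ reads $X_\delta=z_ia_{r-1,i+1}-a_{r-2,i+1}-a_{r-2,i+2}$. Proposition~\ref{prop::unifrieze}(2) with $\ell=1$ gives $a_{r,i}=z_ia_{r-1,i+1}-a_{r-2,i+2}$. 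Hence $X_\delta=a_{r,i}-a_{r-2,i+1}$.
\item \emph{The identity then follows.} It is the unimodularity relation of the diamond with top $a_{r-2,i+1}$, middle row $a_{r-1,i},a_{r-1,i+1}$ (not ``top'') and bottom $a_{r,i}$. The case $r=2$ is consistent because $a_{0,j}=1=X_0$.
\end{itemize}

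The two routes differ in what they take as input.
\begin{itemize}
\item The paper uses the diamond case of \cite[Theorem~3.8]{RS26} and deduces Theorem~\ref{theo::main} from it by unimodularity.
\item You use only \cite[Theorem~3.1]{RS26}, through the Definition-Lemma, together with the exchange relations of \cite{RRS25}. Your intermediate equality $X_\delta=a_{r,i}-a_{r-2,i+1}$ already is Theorem~\ref{theo::main}. So you reverse the paper's logic: you get the main theorem directly, and Theorem~\ref{theo::rs} as a corollary.
\end{itemize}
Since $a_{r-2,i+1}\neq 0$, your computation also shows that, for each $i$, the Definition-Lemma identity at $\beta_i$ and the identity of Theorem~\ref{theo::rs} are equivalent. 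Either result from \cite{RS26} therefore suffices. The paper's citation route is shorter, but it invokes a more general product formula of which only this special case is used.
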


\begin{proof}
  Again this is a specialization of a result proved in the principal coefficients case and with different notations.
  Theorem~3.8 of \cite{RS26} expands the product of any two cluster variables whose~$\bd$-vector sit in the~$(r-1)$-st row of a truncated tube of rank~$r$ in terms of theta functions but we only care about the case where the corresponding~$\bd$-vectors are part of a diamond.
\end{proof}

We can now prove the main result of this note.

\begin{proof}(of Theorem~\ref{theo::main}.)
  Let~$\cT$ be a truncated tube and let~$\beta_1, \ldots, \beta_r$ be the roots at the mouth of the tube.
  An entry on the~$(r-2)$-nd row of the frieze~$\cT^{\bu}$ has the form~$X_{\sum_{j=1}^{r-2}\beta_{i+j}} = X_{\delta - \beta_{i} - \beta_{i-1}}$ for some~$i\in\{1, \ldots, r\}$ (where the indices are taken modulo~$r$).
  This is a cluster variable, hence it is not~$0$ and we can divide by it.
  By Theorem~\ref{theo::rs}, we have that
  \[
    X_{\delta} = \frac{X_{\delta - \beta_i} X_{\delta- \beta_{i-1}} - 1}{X_{\delta - \beta_i - \beta_{i-1}}}  - X_{\delta - \beta_i - \beta_{i-1}}.
  \]
  Now, by the unimodularity rule,~$\frac{X_{\delta - \beta_i} X_{\delta- \beta_{i-1}} - 1}{X_{\delta - \beta_i - \beta_{i-1}}}$ is the entry in row~$r$ below~$X_{\delta - \beta_i - \beta_{i-1}}$ in the frieze.
  Thus the difference in the right-hand side of the above equation is the growth coefficient of the frieze~$\cT^{\bu}$.
  The equation thus says that this growth coefficient is~$X_\delta$.
\end{proof}

\begin{remark}
  By Proposition~\ref{prop::unifrieze} and Definition-Lemma~\ref{deflem::highertheta} the growth coefficients~$s_k$ and the theta functions~$X_{k\delta}$ satisfy the same recurrence relations so they coincide.
\end{remark}

\begin{remark}
  In the context of representation theory of hereditary algebras there is also the notion of \emph{homogeneous tubes}.
  These are tubes of rank~$1$ and the elements of the associated universal frieze satisfy the Chebyshev recursion with initial conditions~$a_{0,j} = 1$ and~$a_{1,j}=z$ for all $j\in\bZ$.
  The generalization to our setting is straightforward.
  As the Chebyshev recursion is linear, specializing such a frieze to~$z=X_\delta$, yields again a frieze whose growth coefficient is~$X_\delta$.
\end{remark}

\section{Acknowledgements}
We thank Karin Baur, Anna Felikson, Deepanshu Prasad, Pavel Tumarkin and Emine Y\i ld\i r\i m for letting us know of their work in \cite{BFPTY} where they analyse the growth coefficients of infinite frieze patterns arising from cluster algebras of tame type.
In the simply laced setting they derive the same results contained in this note using cluster modular groups and cluster categories.

Both authors were partially supported by a PHC Galileo grant.
The work of P.-G. P. is supported by the Institut Universitaire de France (IUF).
S.S. is partially supported by PRIN 20223FEA2E - PE1 and by INdAM - GNSAGA.

\bibliographystyle{plain}
\bibliography{sp}

\end{document}